\documentclass[11pt]{article}

\usepackage[T1]{fontenc}
\usepackage[utf8]{inputenc}
\usepackage{amsmath,amssymb,amsthm,mathtools}
\usepackage{newtxtext,newtxmath}
\usepackage[a4paper,textwidth=15.2cm,textheight=22.8cm,centering]{geometry}
\usepackage{microtype}
\usepackage[hidelinks]{hyperref}
\usepackage{enumitem}
\usepackage{mathrsfs}
\usepackage{titling}
\usepackage{authblk}

\newtheorem{theorem}{Theorem}[section]
\newtheorem{proposition}[theorem]{Proposition}
\newtheorem{lemma}[theorem]{Lemma}
\newtheorem{corollary}[theorem]{Corollary}
\theoremstyle{remark}
\newtheorem{remark}[theorem]{Remark}
\numberwithin{equation}{section}

\newcommand{\R}{\mathbb R}
\newcommand{\C}{\mathbb C}

\newcommand{\norm}[1]{\left\|#1\right\|}

\newcommand{\conv}{\operatorname{conv}}
\newcommand{\comp}{\operatorname{comp}}
\newcommand{\caplog}{\operatorname{cap}}
\newcommand{\diam}{\operatorname{diam}}
\newcommand{\dist}{\operatorname{dist}}
\newcommand{\doi}[1]{\href{https://doi.org/#1}{doi:#1}}

\title{Optimal polynomial meshes \\ beyond geometric boundary regularity \\ via Green sublevels}

\author[1,2]{Damián Pinasco}
\author[1]{María Victoria Venuti}
\affil[1]{Universidad Torcuato Di Tella, Departamento de Matemática y Estadística, Av. Figueroa Alcorta 7350, C1428 Buenos Aires, Argentina}
\affil[2]{CONICET, Argentina}
\date{\small\texttt{dpinasco@utdt.edu}\qquad \texttt{victoria.venuti@utdt.edu}}

\begin{document}
\maketitle
\vspace{-1.0em}

\begin{abstract}
We establish a potential-theoretic sufficient condition for a compact subset of the real line to admit an optimal polynomial mesh. The criterion bounds the cardinality of a norming set by a linear term in the polynomial degree plus the number of connected components of a Green sublevel at height $\alpha/n$, where $\alpha>0$ is fixed. By combining this principle with an estimate due to Andrievskii, we prove that every uniformly perfect compact subset of $\R$ admits an optimal polynomial mesh. A standard product argument then produces optimal meshes on finite Cartesian
products; in particular, it yields an optimal mesh on the planar Cantor dust
$C\times C$, which is self-similar, totally disconnected, and has empty interior.
\end{abstract}

\medskip
\noindent\textbf{2020 Mathematics Subject Classification.}
41A17, 31A15, 41A63.

\noindent\textbf{Keywords.}
Optimal polynomial meshes, norming sets, Green function, equilibrium measure,
uniformly perfect sets, Dubiner metric.

\section{Introduction}

A basic problem in polynomial approximation is to control the uniform norm of a polynomial on a compact set from its values on a finite subset. Let $\Pi_n$ denote the real algebraic polynomials of degree at most $n$. For a compact set $A\subset\R$ and a polynomial $p$, we write
\[
\norm{p}_A:=\sup_{x\in A}|p(x)|.
\]
If $E\subset\R$ is an infinite compact set, a finite subset $A_n\subset E$ is called a norming set for $\Pi_n$ with constant $\Lambda\ge1$ provided that
\[
\norm{p}_E\le\Lambda\norm{p}_{A_n},\qquad p\in\Pi_n.
\]
A sequence of such sets with the same constant $\Lambda$ independent of $n$ is an admissible polynomial mesh; it is optimal when $|A_n|=O(n)$, which is the natural order since $\dim\Pi_n=n+1$.

For the product construction in Section~\ref{sec:examples}, we denote by
$\mathcal P_n^d$ the real algebraic polynomials in $d$ variables of total
degree at most $n$. The relevant dimension for a compact set $K\subset\R^d$
is that of the restricted space $\mathcal P_n^d|_K$. If $K$ is
polynomial determining, then
\[
\dim(\mathcal P_n^d|_K)
=
\dim\mathcal P_n^d
=
\binom{n+d}{d}
\asymp n^d,
\]
so a polynomial mesh of cardinality $O(n^d)$ has the optimal possible order.

Polynomial meshes were introduced in the modern approximation-theoretic framework by Calvi and Levenberg \cite{CalviLevenberg2008}, where admissible and weakly admissible meshes arise naturally in the context of discrete least-squares approximation. Subsequent development by Bos, Calvi, Levenberg, Sommariva, and Vianello \cite{BosEtAl2011} emphasized geometric constructions and applications to discrete least-squares approximation, approximate Fekete points and interpolation. Dubiner's polynomial metric \cite{Dubiner1995}, which quantifies polynomial variation within a geometry adapted to the compact set, has also played an important role in constructions of optimal cardinality; see also the systematic metric framework developed by Bos, Levenberg, and Waldron \cite{BosLevenbergWaldron2008}.

The problem of achieving the smallest possible order of cardinality was highlighted by Kroó in \cite{Kroo2011}. Kroó established optimal meshes for several classes, including convex polytopes and sufficiently smooth star-like domains, and conjectured that every convex body admits an optimal polynomial mesh. Subsequent work proceeded along two complementary directions: reducing the boundary regularity required for star-like or star-shaped domains, and exploiting convexity to treat domains without smoothness assumptions on the boundary. Kroó derived Bernstein-type inequalities for star-like domains \cite{Kroo2013}; Piazzon addressed star-shaped Lipschitz domains under positive-reach assumptions and general $C^{1,1}$ domains \cite{Piazzon2016}; Kroó proved the convex-domain result in the plane using tangential Bernstein estimates \cite{Kroo2019}; and Dai and Prymak ultimately confirmed the conjecture in every dimension \cite{DaiPrymak2024}. The proof of Dai and Prymak relies on a boundary-adapted metric and separation-covering arguments rooted in Dubiner geometry.

Polynomial-mesh theory also extends well beyond smooth or convex domains, through Markov inequalities and weakly admissible meshes \cite{BosEtAl2011}. Stability under small perturbations on Markov compacts was investigated by Piazzon and Vianello \cite{PiazzonVianello2013}, while Tchakaloff meshes can be constructed from positive quadrature rules and Christoffel-function estimates \cite{BosVianello2019}. In contrast, the present paper concerns a different situation: compact subsets of the real line for which connected domain geometry may be entirely absent.

Let $E\subset\R$ be a regular compact set of positive logarithmic capacity, and let $g_E$ be the Green function of $\widehat\C\setminus E$ with pole at infinity. For $\alpha>0$ and $n\ge1$, define
\begin{equation}
K_{n,\alpha}:=\conv(E)\cap\{x\in\R:g_E(x)\le \alpha/n\},
\qquad
m_E(n,\alpha):=\#\comp K_{n,\alpha}.
\label{eq:intro-sublevel}
\end{equation}
Here $\conv(E)=[\min E,\max E]$ is the convex hull of $E$. Section~\ref{sec:prelim} recalls the potential-theoretic notation used in this definition.

For $\Lambda>1$, we define
\begin{equation}
M_E(n,\Lambda):=\min\Bigl\{|A|:A\subset E,\ 
\norm{p}_E\le \Lambda\norm{p}_A\ \text{for every }p\in\Pi_n\Bigr\}.
\label{eq:ME-def}
\end{equation}
For comparison, the classical Fekete-point power argument, with the
oversampling factor adjusted to the prescribed norming constant, shows
that for every infinite compact set $E\subset\R$ and every $\Lambda>1$,
\[
M_E(n,\Lambda)=O_\Lambda(n\log(n+1));
\]
see, for example, \cite{Bos2018}. Thus the linear estimates established in this paper remove the logarithmic oversampling present in this general construction.

Our main estimate is
\begin{equation}
M_E(n,\Lambda)
\le m_E(n,\alpha)
   +\frac{\pi e^\alpha}{1-\Lambda^{-1}}\,n.
\label{eq:main-intro}
\end{equation}
The first term measures the number of interval components of the Green sublevel and the second is the polynomial sampling cost. The proof distributes this linear cost among the components according to their equilibrium masses, whose sum is one.

Consequently, a linear bound for $m_E(n,\alpha)$ gives an optimal polynomial mesh. Andrievskii's work on Chebyshev polynomials provides precisely such an estimate for uniformly perfect compact subsets of the real line: after affine normalization, the Green sublevel at height $1/n$ has at most $c_E n$ interval components \cite[proof of Theorem~3, p.~521, immediately before (4.12)]{Andrievskii2017}. Combining this estimate with \eqref{eq:main-intro} yields optimal meshes
on every uniformly perfect compact subset of $\R$.

The class of uniformly perfect compact sets encompasses a wide range of geometries, including finite-gap sets and totally disconnected fractal sets. Examples of the latter include the classical ternary Cantor set, other fixed-ratio Cantor sets, the positive-measure Smith--Volterra--Cantor set, and real Cantor Julia sets arising from quadratic dynamics. In particular, for the ternary Cantor set $C$,
\[
n+1\le M_C(n,\Lambda)\le C_\Lambda n.
\]
Polynomial interpolation on Cantor-type sets has also been studied from the different viewpoint of Lebesgue constants: Goncharov and Paksoy prove unboundedness for three natural Cantor families \cite{GoncharovPaksoy2025}. Their result concerns interpolation stability rather than the degree-uniform norming problem considered here.

Cartesian products yield immediate higher-dimensional examples. If
$E_1,\ldots,E_d\subset\R$ admit optimal polynomial meshes, the standard
product construction gives an optimal mesh of cardinality $O(n^d)$ on
$E_1\times\cdots\times E_d$. In particular, the ternary Cantor result
yields optimal meshes on $C^d$; for $d=2$ this includes the planar Cantor
dust $C\times C$.

The proof of \eqref{eq:main-intro} follows the univariate principle emphasized by Vianello: a Bernstein-type derivative inequality can be integrated in its natural cumulative coordinate and then discretized there \cite[Proposition~1 and Remark~1, pp.~930--931]{Vianello2014}. Here the auxiliary compact set depends on $n$ and may have many connected components, while the final sampling points are required to belong to the original compact set $E$. Cumulative equilibrium mass provides the bookkeeping needed to satisfy both requirements. The resulting nodes also form an $O(1/n)$-net for the intrinsic Dubiner metric of $E$.

On a single interval the construction reduces to the classical Bernstein framework: the equilibrium measure is the arcsine measure and uniform spacing in cumulative equilibrium mass gives the Chebyshev--Lobatto points. This interval model also identifies the equilibrium coordinate exactly with the Dubiner distance, up to normalization.

Brudnyi and Yomdin investigated norming sets and Remez-type inequalities
for fixed finite-dimensional spaces of continuous functions
\cite{BrudnyiYomdin2016}. In particular, if $V$ has dimension $\ell$ and
$Z$ is $V$-norming, their finite extraction result yields a subset
$Z_0\subset Z$ of cardinality $\ell$, with a possible loss by a factor
$\ell$ in the norming constant. Thus, when $V=\Pi_n$, this argument does
not provide a degree-uniform admissible mesh, since $\ell=n+1$. More recently, Bia\l as-Cie\.z, Kowalska and Sommariva list uniformly
perfect sets and certain Cantor sets among examples satisfying a division
inequality in their study of polynomial meshes on algebraic sets
\cite{BialasCiezEtAl2026}. Their lifting constructions transfer meshes
from a projected compact base to subsets of algebraic varieties; in
particular, they require a mesh on the base and therefore do not by
themselves provide an optimal polynomial mesh on a Cantor base. The Green-level transfer estimate obtained here turns potential-theoretic control of the sublevels into a degree-uniform norming construction on the original compact set. Combined with Andrievskii's component estimate, it yields optimal polynomial meshes for every uniformly perfect compact subset of the real line.

The remainder of the paper is organized as follows. Section~\ref{sec:prelim} collects the potential-theoretic notation and the two polynomial inequalities used in the main argument. Section~\ref{sec:transfer} analyzes the Green sublevels and proves \eqref{eq:main-intro}, together with its interval and Dubiner interpretations. Section~\ref{sec:up} applies Andrievskii's estimate to uniformly perfect compact sets. Section~\ref{sec:examples} discusses representative irregular examples and finite products.

\section{Preliminaries}\label{sec:prelim}

\subsection{Capacity, equilibrium measure and Green function}

Let $E\subset\C$ be a compact set and let $\mathcal M_1(E)$ denote the Borel probability measures supported on $E$. For $\mu\in\mathcal M_1(E)$ its logarithmic potential and energy are
\[
U^\mu(z):=\int_E\log\frac1{|z-t|}\,d\mu(t),
\qquad
I(\mu):=\iint_{E\times E}\log\frac1{|x-y|}\,d\mu(x)d\mu(y).
\]
The Robin constant and logarithmic capacity are defined by
\begin{equation}
V_E:=\inf_{\mu\in\mathcal M_1(E)}I(\mu),
\qquad
\caplog(E):=e^{-V_E}.
\label{eq:capacity}
\end{equation}
We say that $E$ has positive logarithmic capacity when $\caplog(E)>0$, equivalently $V_E<\infty$. In this case, there exists a unique equilibrium measure $\mu_E\in\mathcal M_1(E)$ with $I(\mu_E)=V_E$; see, for example, Saff \cite[Lemma~1.7 and Definition~1.8, pp.~172--173]{Saff2010}.

For a compact set $E\subset\R$ of positive logarithmic capacity,
$\Omega=\widehat\C\setminus E$ is connected. Its Green function with
pole at infinity is denoted by
\[
g_E(z)=G_\Omega(z,\infty).
\]
With the standard normalization, one has
\begin{equation}
g_E(z)=V_E-U^{\mu_E}(z)
\quad (z\in\Omega),
\qquad
g_E(z)=\log|z|-\log\caplog(E)+o(1)
\quad(z\to\infty),
\label{eq:green-potential}
\end{equation}
see \cite[Definition~3.4 and (3.1), p.~184]{Saff2010}. The set $E$ is called \emph{regular} when $g_E$ extends continuously to $E$ with boundary value zero. For regular $E$, we use the same notation $g_E$ for its continuous
extension by zero to $E$.

\subsection{Bernstein--Walsh and Totik's inequality}

Our first standard tool is the Bernstein--Walsh inequality. If $E$ is regular and of positive logarithmic capacity, then for every $p\in\Pi_n$,
\begin{equation}
|p(z)|\le \norm{p}_E e^{n g_E(z)},
\qquad z\in\C,
\label{eq:BW}
\end{equation}
where on $E$ the inequality is understood using the continuous extension $g_E=0$; see Saff \cite[Lemma~3.7, p.~186]{Saff2010}.

The second input concerns a finite union of nondegenerate closed intervals
\[
K=\bigcup_{j=1}^{\ell}[a_j,b_j],
\qquad
a_1<b_1<a_2<\cdots<a_\ell<b_\ell.
\]
For such a set, the equilibrium measure is absolutely continuous with
respect to Lebesgue measure. We write
\[
d\mu_K(x)=\omega_K(x)\,dx.
\]
Totik gives an explicit formula for the density $\omega_K$ in
\cite[(2.4), p.~143, and (2.8), p.~144]{Totik2001}. More importantly for
our purposes, his exact Bernstein--Szeg\H{o} inequality implies that, for
every polynomial $p$ of degree $m\ge1$,
\begin{equation}
\left(\frac{p'(x)}{\pi\omega_K(x)}\right)^2
+m^2p(x)^2
\le m^2\norm{p}_K^2,
\qquad x\in\operatorname{Int}K,
\label{eq:Totik-strong}
\end{equation}
and hence
\begin{equation}
|p'(x)|\le m\pi\omega_K(x)\norm{p}_K,
\qquad x\in\operatorname{Int}K.
\label{eq:Totik}
\end{equation}
See \cite[Theorem~3.1, (3.3)--(3.4), p.~149]{Totik2001}. In particular,
the coefficient in \eqref{eq:Totik} is independent of the number,
lengths, and relative positions of the interval components.

\section{Green sublevels and the transfer estimate}\label{sec:transfer}

Let $E\subset\R$ be a regular compact set of positive logarithmic capacity, and denote $I=\conv(E)$. For $t>0$ define
\begin{equation}
K_E(t):=I\cap\{x\in\R:g_E(x)\le t\}.
\label{eq:Kt}
\end{equation}
The elementary structure of these sublevels is the reason that the finite-union inequality \eqref{eq:Totik} can be used.

\begin{lemma}\label{lem:gaps}
For every $t>0$, $K_E(t)$ is a finite union of nondegenerate closed intervals, each of which intersects $E$. Moreover, if $H=(a,b)$ is a gap of $E$ within $I$, then $g_E$ is strictly concave on $H$, and $\{x\in H:g_E(x)>t\}$ is either empty or a single open interval.
\end{lemma}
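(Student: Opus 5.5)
Using \eqref{eq:green-potential}, on $\R\setminus E$ we have $g_E(x)=V_E-U^{\mu_E}(x)=V_E+\int_E\log|x-s|\,d\mu_E(s)$. Concavity on a gap is the core of the argument, so I would prove it first. Fix a gap $H=(a,b)\subset I\setminus E$ with $a,b\in E$. For each $s\in E$ the function $x\mapsto\log|x-s|$ is real-analytic on $H$, and its second derivative is $-(x-s)^{-2}<0$. Differentiating under the integral, which is allowed because $\dist(x,E)$ is bounded below on compact subsets of $H$, gives
\[
g_E''(x)=-\int_E\frac{d\mu_E(s)}{(x-s)^2}<0,\qquad x\in H.
\]
Hence $g_E$ is strictly concave on $H$. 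Since $E$ is regular, $g_E$ is continuous on $\R$ and equals $0$ at $a$ and $b$, and it is positive on $H$ because $H\subset\Omega$. A strictly concave continuous function on $[a,b]$ has superlevel set $\{g_E>t\}\cap H$ that is convex, so it is either empty or an open interval $(c,d)$ with $a<c<d<b$. The interval is open by continuity, and its endpoints lie strictly inside $H$ because $g_E(a)=g_E(b)=0<t$.

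Next I would describe $K_E(t)$ globally. The open set $I\setminus E$ is a countable disjoint union of gaps $H_k$. On $E$ we have $g_E=0\le t$, so $I\setminus K_E(t)=\bigcup_k\bigl(H_k\cap\{g_E>t\}\bigr)$. By the first step this is a disjoint union of open intervals $J_k$, each nonempty one compactly contained in its gap. The key finiteness claim is that only finitely many $J_k$ are nonempty. To show it, note that $g_E$ is uniformly continuous on $I$ and vanishes at the endpoints of each gap. By uniform continuity there is $\delta>0$ such that $|x-y|<\delta$ implies $|g_E(x)-g_E(y)|<t$. On a gap of length $<2\delta$, every point is within $\delta$ of an endpoint, so $g_E<t$ throughout and $J_k=\emptyset$. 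Only finitely many gaps have length $\ge2\delta$, since their total length is at most $|I|$. I expect this finiteness to be the main obstacle, and uniform continuity together with regularity is what resolves it.

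Therefore $K_E(t)=I\setminus\bigcup_{k=1}^{N}J_k$ with finitely many disjoint open intervals $J_k=(c_k,d_k)$, each satisfying $\overline{J_k}\subset H_k\subset\operatorname{Int}I$. This set is a finite union of closed intervals. To see that the components are nondegenerate and meet $E$, take a component $[u,v]$. Its left endpoint is either $\min E$ or some $d_k$, and its right endpoint is either $\max E$ or some $c_j$. Suppose first that $u=d_k$ with $d_k<b_k$ and $b_k\in E$ the right end of $H_k$. On $[d_k,b_k]$ we have $g_E\le t$ by the gap analysis, so $[d_k,b_k]\subset[u,v]$. The component therefore contains the point $b_k\in E$ and has positive length. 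If instead $u=\min E$, then $u\in E$ already. Nondegeneracy in this case follows from the symmetric argument at $v$, together with the fact that $u=v$ is impossible: a single point of $E$ is not isolated from $K_E(t)$, because $g_E$ is continuous with value $0<t$ there, so a neighborhood of it in $I$ lies in $K_E(t)$.
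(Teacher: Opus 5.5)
Your proof is correct and follows essentially the same route as the paper. Both arguments get strict concavity on each gap by differentiating the equilibrium potential under the integral sign. Both get finiteness from uniform continuity of $g_E$ on $I$, which forces every gap of Green height above $t$ to have length bounded below. Your explicit analysis of component endpoints is a more detailed version of the paper's remark that concavity prevents a component from lying strictly inside a gap.
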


\begin{proof}
Fix a compact subinterval $H'\subset H=(a,b)$. Since $\dist(H',E)>0$, the first two derivatives of $\log|x-s|$ are uniformly dominated on $H'\times E$, so differentiation under the integral sign is justified. As $H'$ is arbitrary, \eqref{eq:green-potential} gives
\[
(U^{\mu_E})''(x)=\int_E\frac{d\mu_E(s)}{(x-s)^2}>0,
\qquad x\in H,
\]
and hence
\begin{equation}
g_E''(x)=-\int_E\frac{d\mu_E(s)}{(x-s)^2}<0,
\qquad x\in H.
\label{eq:concavity}
\end{equation}
Thus $g_E$ is strictly concave on $H$. Since $E$ is regular, $g_E(a)=g_E(b)=0$, and every positive superlevel set in $H$ is either empty or a single open interval. It follows that $K_E(t)$ is obtained from $I$ by deleting pairwise disjoint open intervals, one from each gap whose Green height exceeds $t$.

Only finitely many such gaps occur for a fixed $t>0$. Since $g_E$ is continuous on the compact interval $I$ and vanishes on $E$, there exists $\eta>0$ such that $\dist(x,E)<\eta$ implies $g_E(x)<t$. If a gap $H=(a,b)$ has Green height greater than $t$, it contains a point $x$ with $g_E(x)>t$, hence $\dist(x,E)\ge\eta$. Since $a,b\in E$, this gives $b-a\ge2\eta$. The disjoint gaps lie in the bounded interval $I$, so there can be only finitely many gaps of length at least $2\eta$.

Finally, every point of $E$ has a neighbourhood in $I$ on which $g_E<t$. Hence a component of $K_E(t)$ meeting $E$ is nondegenerate. Conversely, the strict concavity just proved prevents a component of the sublevel set from lying strictly inside a gap without reaching one of its endpoints. Thus every component meets $E$.
\end{proof}

For fixed $\alpha>0$ and $n\ge1$ we write
\begin{equation}
K_{n,\alpha}=K_E(\alpha/n),
\qquad
m_E(n,\alpha)=\#\comp K_{n,\alpha}.
\label{eq:m-def}
\end{equation}
By Lemma~\ref{lem:gaps}, this number is finite. Bernstein--Walsh gives, for $p\in\Pi_n$,
\begin{equation}
\norm{p}_{K_{n,\alpha}}\le e^\alpha\norm{p}_E.
\label{eq:BW-sublevel}
\end{equation}

\begin{theorem}\label{thm:transfer}
Let $E\subset\R$ be a regular compact set of positive logarithmic capacity. Fix $\alpha>0$ and $\Lambda>1$. Then for every $n\ge1$, there exists a finite set $A_n\subset E$ such that
\begin{equation}
\norm{p}_E\le \Lambda\norm{p}_{A_n},
\qquad p\in\Pi_n,
\label{eq:norming-main}
\end{equation}
and
\begin{equation}
|A_n|
\le
m_E(n,\alpha)
+\frac{\pi e^\alpha}{1-\Lambda^{-1}}\,n.
\label{eq:card-main}
\end{equation}
Consequently, if $m_E(n,\alpha)=O(n)$ for one fixed $\alpha>0$, then $E$ admits an optimal polynomial mesh.
\end{theorem}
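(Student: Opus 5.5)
Write $K=K_{n,\alpha}=\bigcup_{j=1}^{\ell}[a_j,b_j]$ with $\ell=m_E(n,\alpha)$, as given by Lemma~\ref{lem:gaps}. Let $F(x)=\mu_K([\min K,x])$ be the cumulative equilibrium mass of $K$. It is continuous and nondecreasing, strictly increasing on each component, and constant on the gaps between components. We work in this coordinate because \eqref{eq:Totik} says that every $p\in\Pi_n$ is Lipschitz in $F$ on $K$. Indeed, on each component,
\[
|p(x)-p(y)|\le \int_x^y|p'|\le n\pi\norm{p}_K\int_x^y\omega_K=n\pi\norm{p}_K\,|F(x)-F(y)|.
\]
Combining this with \eqref{eq:BW-sublevel} gives
\[
|p(x)-p(y)|\le n\pi e^\alpha\norm{p}_E\,|F(x)-F(y)|
\]
whenever $x,y$ lie in the same component of $K$. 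The bound holds on the closed intervals by continuity.

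\textbf{Nodes in $E$ within each component.} Set $\delta=(1-\Lambda^{-1})/(n\pi e^\alpha)$. Fix a component $J=[a_j,b_j]$ with mass $\mu_K(J)=\beta_j$; these masses sum to $1$. Since $E$ is compact and $E\cap J\neq\emptyset$ by Lemma~\ref{lem:gaps}, the set $F(E\cap J)$ is a compact subset of $F(J)=[c_j,c_j+\beta_j]$.

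The main point is to choose nodes in $E\cap J$ so that every $x\in E\cap J$ has a node $y$ in $J$ with $|F(x)-F(y)|\le\delta$. A greedy sweep does this. Let $y_1=\min(E\cap J)$. Given $y_k$, let $y_{k+1}$ be the largest point of $E\cap J$ with $F(y_{k+1})\le F(y_k)+\delta$, and if this does not advance, take the smallest point of $E\cap J$ beyond it.

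Only points of $E$ need to be covered, so we must make sure the node count is $\le 1+\beta_j/\delta$. For this we assign to each point of the compact set $F(E\cap J)\subset[c_j,c_j+\beta_j]$ the nodes of a $\delta$-net taken within the set itself. A standard packing argument gives a net of size at most $\lfloor\beta_j/\delta\rfloor+1$. Concretely, for each $k=0,\dots,\lfloor\beta_j/\delta\rfloor$, pick one point of $F(E\cap J)$ in the window $[c_j+k\delta,\,c_j+(k+1)\delta]$, if the window meets the set. A point $x$ then lies in the same window as some chosen node, so it is within $\delta$ of it, and the node count is at most $\lfloor\beta_j/\delta\rfloor+1\le 1+\beta_j/\delta$. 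Pulling the chosen values back along $F$ gives points of $E\cap J$, since $F$ is injective on $J$.

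\textbf{Norming and counting.} Let $A_n$ be the union of these nodes over all components. Let $x\in E$ attain $\norm{p}_E$, and let $y\in A_n$ be a node in the same component with $|F(x)-F(y)|\le\delta$. Then
\[
\norm{p}_E\le|p(y)|+n\pi e^\alpha\delta\norm{p}_E=\norm{p}_{A_n}+(1-\Lambda^{-1})\norm{p}_E,
\]
which gives \eqref{eq:norming-main}. Summing over the components,
\[
|A_n|\le\sum_j\bigl(1+\beta_j/\delta\bigr)=\ell+\frac{1}{\delta}=m_E(n,\alpha)+\frac{\pi e^\alpha}{1-\Lambda^{-1}}\,n,
\]
which is \eqref{eq:card-main}. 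The optimality consequence follows immediately.

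We expect the main obstacle to be checking that \eqref{eq:Totik} applies. It requires $\deg p\ge1$, and constants are trivial. It also holds only on $\operatorname{Int}K$, but since $\omega_K$ is integrable (endpoint singularities of inverse-square-root type) it integrates up to the endpoints. Finally, $\deg p$ may be less than $n$; this is harmless since the resulting bound $\deg p\le n$ still works.
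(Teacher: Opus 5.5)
Your proof is correct and follows the paper's argument. Integrating Totik's inequality in the cumulative equilibrium coordinate of $K_{n,\alpha}$ and combining it with Bernstein--Walsh makes $p$ Lipschitz in that coordinate with constant $n\pi e^{\alpha}\norm{p}_E$. A componentwise $\delta$-net of $F(E\cap J)$ with at most $1+\mu_K(J)/\delta$ points, summed using total mass one, then gives the bound. Your window selection produces the same net that the paper obtains as a maximal $\beta/n$-separated subset, and the greedy-sweep aside is unnecessary because the windows already handle both the covering and the counting.
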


\begin{proof}
Write $K_{n,\alpha}=J_1\cup\cdots\cup J_m$, where $m=m_E(n,\alpha)$ and the $J_j$ are the pairwise disjoint interval components given by Lemma~\ref{lem:gaps}. Let $\nu_n$ be the equilibrium measure of $K_{n,\alpha}$ and let $\omega_n$ denote its density.

For each $J_j$ define
\begin{equation}
F_j(x):=\nu_n\bigl(J_j\cap(-\infty,x]\bigr),
\qquad x\in J_j.
\label{eq:Fj}
\end{equation}
Since $\nu_n$ is absolutely continuous, $F_j$ is continuous on $J_j$.
Hence
\[
Y_j:=F_j(E\cap J_j)
\]
is compact. We discretize only $Y_j$, so that every selected node will belong to $E$. Put
\begin{equation}
\beta:=\frac{1-\Lambda^{-1}}{\pi e^\alpha}.
\label{eq:beta}
\end{equation}
Choose a maximal $\beta/n$-separated subset of $Y_j$. It is automatically a $\beta/n$-net of $Y_j$. Since $Y_j$ is contained in an interval of length $\nu_n(J_j)$, the number of selected values is at most $1+n\nu_n(J_j)/\beta$. For each selected value choose one preimage in $E\cap J_j$, and call the resulting set $A_{n,j}$.

With $A_n=\bigcup_{j=1}^m A_{n,j}$, the total equilibrium mass gives
\begin{equation}
|A_n|
\le
m+\frac n\beta\sum_{j=1}^m\nu_n(J_j)
=
m+\frac n\beta,
\label{eq:card-sum}
\end{equation}
which is \eqref{eq:card-main}.

Fix $x\in E\cap J_j$. There exists $a\in A_{n,j}$ such that
$|F_j(x)-F_j(a)|\le\beta/n$. Since $\nu_n$ is absolutely continuous, it has no atoms, and therefore
\begin{equation}
\nu_n([x,a])=|F_j(x)-F_j(a)|\le\frac\beta n,
\label{eq:mass-close}
\end{equation}
where $[x,a]$ denotes the interval between the two points. Integrating \eqref{eq:Totik} and then using \eqref{eq:BW-sublevel},
\begin{equation}
\begin{aligned}
|p(x)-p(a)|
&\le n\pi\norm{p}_{K_{n,\alpha}}\nu_n([x,a])\\
&\le \pi\beta e^\alpha\norm{p}_E
=(1-\Lambda^{-1})\norm{p}_E.
\end{aligned}
\label{eq:oscillation}
\end{equation}
This argument remains valid when $x$ or $a$ is an endpoint of a component: \eqref{eq:Totik} holds almost everywhere in its interior, while
$\omega_n$ is integrable and the endpoints have Lebesgue measure zero.

Choose $x_0\in E$ with $|p(x_0)|=\norm{p}_E$ and take the corresponding $a\in A_n$. Then \eqref{eq:oscillation} gives $|p(a)|\ge\Lambda^{-1}\norm{p}_E$, proving \eqref{eq:norming-main}.
\end{proof}

The construction is a degree-dependent version of the one-dimensional discretization principle in Vianello \cite[Proposition~1 and Remark~1, pp.~930--931]{Vianello2014}. The point specific to the present setting is that $K_{n,\alpha}$ may have many components and the nodes are nevertheless selected on $E$. The mass identity $\sum_j\nu_n(J_j)=1$ turns the componentwise sampling into a single linear budget.

\begin{remark}\label{rem:alpha}
The qualitative condition in the last sentence of Theorem~\ref{thm:transfer} is independent of the fixed positive level parameter. If $m_E(n,\alpha_0)=O(n)$ for some $\alpha_0>0$, then $m_E(n,\alpha)=O(n)$ for every $\alpha>0$. Indeed, by Lemma~\ref{lem:gaps}, the number of components of $K_E(t)$ is nonincreasing in $t$. If $\alpha\ge\alpha_0$ the conclusion is immediate. If $0<\alpha<\alpha_0$, let $N=\lceil\alpha_0n/\alpha\rceil$. Since $\alpha_0/N\le\alpha/n$,
\[
m_E(n,\alpha)\le m_E(N,\alpha_0)=O(N)=O(n).
\]
\end{remark}

\subsection{Equilibrium mass and intrinsic Dubiner geometry}\label{subsec:dubiner}

The preceding sampling argument was designed to compare polynomial values directly. It also has a geometric interpretation. The Dubiner metric measures how much angular variation a polynomial of bounded norm can create per unit of degree, and nets at scale $1/n$ for this metric are a standard source of norming sets. In the present construction the natural angular coordinate first appears on the Green filling $K_{n,\alpha}$ through its equilibrium measure. The point of this subsection is to show that the same nodes are, in fact, a net at scale $1/n$ for the intrinsic Dubiner metric of the original compact set $E$.

For a compact real set $K$, we use the normalization
\begin{equation}
d_K^D(x,y):=
\sup_{\substack{p\in\R[x],\ \deg p\ge1\\ \norm{p}_K\le1}}
\frac{|\arccos p(x)-\arccos p(y)|}{\deg p},
\qquad x,y\in K.
\label{eq:Dubiner}
\end{equation}
This normalization goes back to \cite{Dubiner1995}; see also
\cite{BosLevenbergWaldron2008} for the associated metric framework and
\cite{Vianello2018} in the norming-mesh setting. Although the Dubiner
distance is naturally defined as a pseudometric in general, linear
polynomials separate points of a compact subset of $\R$, so $d_K^D$ is
a genuine metric in the present setting. Its relevance for meshes is immediate: if $A_n\subset K$ is a $\theta/n$-net for $d_K^D$, with $0<\theta<\pi/2$, then
\[
\norm{p}_K\le \frac{1}{\cos\theta}\norm{p}_{A_n},
\qquad p\in\Pi_n.
\]
Indeed, after normalizing $\norm{p}_K=1$ and choosing a point where $|p|=1$, a node at Dubiner distance at most $\theta/n$ changes the angle $\arccos p$ by at most $\theta$. Thus small Dubiner nets are precisely adapted to the norming problem.

For a finite union of intervals, Totik's strong Bernstein inequality makes the relation with equilibrium mass explicit. Let $K$ have equilibrium density $\omega_K$, and let $x,y$ lie in the same interval component. Then
\begin{equation}
d_K^D(x,y)
\le \pi\left|\int_x^y\omega_K(t)\,dt\right|
=\pi\mu_K([x,y]).
\label{eq:dubiner-mass}
\end{equation}
To see this, let $p$ have degree $m\ge1$ and $\norm{p}_K\le1$. From \eqref{eq:Totik-strong},
\[
|p'(t)|\le m\pi\omega_K(t)\sqrt{1-p(t)^2}
\]
almost everywhere in the interior. Applying the chain rule to $\arccos(\rho p)$, $0<\rho<1$, avoids the endpoint singularities of $\arccos$ and gives
\[
\left|\frac{d}{dt}\arccos(\rho p(t))\right|
\le m\pi\omega_K(t)
\frac{\rho\sqrt{1-p(t)^2}}{\sqrt{1-\rho^2p(t)^2}}
\le m\pi\omega_K(t).
\]
Integration from $x$ to $y$, followed by $\rho\uparrow1$, division by $m$ and the supremum over $p$, proves \eqref{eq:dubiner-mass}.

For the nodes constructed in Theorem~\ref{thm:transfer}, \eqref{eq:mass-close} therefore yields, for every $x\in E$, a point $a\in A_n$ in the same component of $K_{n,\alpha}$ such that
\[
d_{K_{n,\alpha}}^D(x,a)
\le \frac{\pi\beta}{n}
=\frac{1-\Lambda^{-1}}{e^\alpha n}.
\]
At this stage the metric is still that of the Green filling. Since $E\subset K_{n,\alpha}$, monotonicity goes in the opposite direction from the one needed here:
\[
d_E^D(x,y)\ge d_{K_{n,\alpha}}^D(x,y),
\qquad x,y\in E.
\]
Thus the preceding estimate does not by itself control the intrinsic geometry of $E$.

Bernstein--Walsh provides the missing information: a polynomial normalized on $E$ grows only exponentially with its degree on a Green sublevel. A direct normalization, however, interacts poorly with $\arccos$ near $\pm1$. The remedy is to amplify angular separation first. If
\[
p(x)=\cos\theta,\qquad p(y)=\cos\phi,
\]
then the Chebyshev polynomial $T_r$ satisfies
\[
T_r(p(x))=\cos(r\theta),\qquad T_r(p(y))=\cos(r\phi).
\]
The next lemma says that a suitable frequency $r$ of order $1/|\theta-\phi|$ turns any nonzero angular separation into a fixed separation of polynomial values. The numerical constants are deliberately nonessential.

\begin{lemma}\label{lem:angular-amplification}
For every $\theta,\phi\in[0,\pi]$ with $\delta=|\theta-\phi|>0$, there is an integer $r$ such that
\[
1\le r\le\frac{16}{\delta},
\qquad
|\cos(r\theta)-\cos(r\phi)|\ge\frac12.
\]
\end{lemma}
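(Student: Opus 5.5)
Write the difference as a product via the sum-to-product identity:
\[
\cos(r\theta)-\cos(r\phi)=-2\sin\!\Bigl(\tfrac{r(\theta+\phi)}2\Bigr)\sin\!\Bigl(\tfrac{r(\theta-\phi)}2\Bigr).
\]
Put $s=(\theta+\phi)/2\in[0,\pi]$ and $h=\delta/2\in(0,\pi/2]$. It then suffices to find an integer $r\in[1,16/\delta]=[1,8/h]$ with $|\sin(rs)|\,|\sin(rh)|\ge 1/4$. The plan is to control the factor $\sin(rh)$ by restricting $r$ to a window, and then to use that window to make $\sin(rs)$ large as well.

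\textbf{Step 1: the window for $r$.} Consider integers $r$ with $rh\in[\pi/6,5\pi/6]$. On this range $|\sin(rh)|\ge1/2$, so we only need $|\sin(rs)|\ge1/2$ for some $r$ in it. The window $I_h=[\pi/(6h),5\pi/(6h)]$ has length $2\pi/(3h)\ge 4/3$, because $h\le\pi/2$. Its right end is $5\pi/(6h)<2.7/h<8/h$. Every integer in it is at least $1$ once one checks that $I_h$ contains an integer $\ge1$. This holds: if $\pi/(6h)\le1$ then $r=1$ lies in $I_h$, since $5\pi/(6h)\ge 5/3>1$; otherwise the window has length more than $1$ and lies to the right of $1$.

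\textbf{Step 2: making $\sin(rs)$ large.} Here we need $\dist(rs,\pi\mathbb Z)\ge\pi/6$ for some integer $r\in I_h$. This is the main obstacle, because $s$ can be tiny, near $\pi$, or close to a rational multiple of $\pi$.

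The first attempt is to compare consecutive integers $r$ and $r+1$, both in $I_h$. If $rs$ and $(r+1)s$ were both within $\pi/6$ of $\pi\mathbb Z$, then $s$ would be within $\pi/3$ of $\pi\mathbb Z$, so $s\in[0,\pi/3]\cup[2\pi/3,\pi]$. In the middle case $s\in(\pi/3,2\pi/3)$ we are therefore done.

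If $s\in[2\pi/3,\pi]$, replace $s$ by $\pi-s$. This changes $|\sin(rs)|$ not at all, since $\sin(r(\pi-s))=\pm\sin(rs)$, so this case reduces to $s\le\pi/3$.

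For small $s\le\pi/3$ the useful fact is $s\ge h$. Indeed $\theta,\phi\ge0$ gives $s=(\theta+\phi)/2\ge|\theta-\phi|/2=h$. Choose $r$ with $rs\in[\pi/6,5\pi/6]$, which requires $r\in[\pi/(6s),5\pi/(6s)]$. This interval has length at least $2\pi/(3s)\ge2$, so it contains an integer, and that integer is at most $5\pi/(6s)\le 5\pi/(6h)$. With $r$ fixed this way we have $|\sin(rs)|\ge1/2$. The difficulty is that $rh$ may now be small, and it need not stay in the window of Step 1.

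\textbf{Step 3: relaxing the constants.} To repair this, relax the demand on the two factors and use the room in the stated bound $r\le16/\delta$. The stated constant $1/2$ is close to what this factorization gives, so the thresholds have to be picked with care.

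One option is to work directly with $|\cos(r\theta)-\cos(r\phi)|$ through a pigeonhole on pairs. Since $\cos$ is $2\pi$-periodic, the vector $(r\theta,r\phi)$ can be treated on the torus. Among $r\in[1,16/\delta]$, the difference $r(\theta-\phi)$ runs through the range up to $16$, that is, more than two full turns. Pick $r$ with $r\delta$ near $\pi$ modulo $2\pi$. Then $r\phi\equiv r\theta\pm(\pi-\epsilon)$, so $\cos(r\phi)\approx-\cos(r\theta)$, and the difference is about $2|\cos(r\theta)|$. Several admissible $r$ of this kind exist, roughly $r\delta\approx\pi,3\pi,5\pi$, and among them one must also secure $|\cos(r\theta)|\ge1/4+\epsilon$. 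That is the same cases argument as in Step 2, now carried out with the choices $r\delta\in\{\pi,3\pi\}$.

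I expect the final bookkeeping to combine the middle case, handled by consecutive integers, with the small-$s$ case, handled by $s\ge h$ and the Step 2 choice. Only the constants remain to be tuned.
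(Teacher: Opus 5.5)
Your reduction is sound. With $s=(\theta+\phi)/2$ and $h=\delta/2$ it suffices to find $1\le r\le 8/h$ with $|\sin(rs)|\,|\sin(rh)|\ge 1/4$, and the facts $s\in[h,\pi-h]$ and $|\sin(r(\pi-s))|=|\sin(rs)|$ are the right tools. However, there is a genuine gap: the proposal stops exactly at the step that carries the content of the lemma, which is ruling out a simultaneous resonance of the two factors. The paper does this in one stroke by averaging $\sin^2(rA)\sin^2(rb)$ against Fej\'er weights over $1\le r\le\lceil 2\pi/b\rceil$. In your Step~2, the case $s\le\pi/3$ is left open once choosing $r$ to make $\sin(rs)$ large loses control of $\sin(rh)$. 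Step~3 only restates the same problem in the torus picture (``one must also secure $|\cos(r\theta)|\ge 1/4+\epsilon$''), and ``only the constants remain to be tuned'' is not an argument. There is also a slip in the middle case: a window of length $2\pi/(3h)\ge 4/3$ need not contain two consecutive integers (e.g.\ $[0.5,1.83]$), so the consecutive-integer argument needs $h$ small or a separate check.

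The missing step is to reverse the roles in Step~2: keep $r$ in the window $I_h$, where $|\sin(rh)|\ge1/2$, and let $rs$ move.
\begin{itemize}
\item If $h\ge\pi/6$, take $r=1$. Since $s\in[h,\pi-h]$, you get $|\sin s\,\sin h|\ge\sin^2h\ge 1/4$.
\item If $h<\pi/6$, then $I_h\subset(1,\infty)$ has length $2\pi/(3h)>4$. This makes your middle case $s\in(\pi/3,2\pi/3)$ rigorous.
\item For $s\le\pi/3$ (after reflecting $s\mapsto\pi-s$, which preserves $s\ge h$), the points $rs$ with $r\in I_h\cap\mathbb Z$ advance in steps $s\le\pi/3$. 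Each good arc $[k\pi+\pi/6,k\pi+5\pi/6]$ is longer than that, so the points cannot jump over one. If none were good, they would all lie in a single bad arc of length $\pi/3$. But the first and last integers in $I_h$ differ by more than $2\pi/(3h)-2$, so by $s\ge h$ these points span more than $2\pi/3-2h>\pi/3$, a contradiction.
\end{itemize}
This yields $r\le 5\pi/(3\delta)<16/\delta$ with the constant $1/2$, and gives an elementary alternative to the Fej\'er averaging. Without a step of this kind, the proposal does not prove the lemma.
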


\begin{proof}
Put
\[
A=\frac{\theta+\phi}{2},
\qquad
b=\frac{|\theta-\phi|}{2}.
\]
Using
\[
\cos u-\cos v
=
-2\sin\left(\frac{u+v}{2}\right)
 \sin\left(\frac{u-v}{2}\right),
\]
we obtain
\[
|\cos(r\theta)-\cos(r\phi)|
=
2|\sin(rA)\sin(rb)|.
\]
The issue is that choosing $r$ of order $1/b$ makes the second sine
nontrivial but does not prevent a simultaneous resonance in the first
one. We average over a short block of frequencies to rule this out.

Let $N=\lceil2\pi/b\rceil$, $w_r=1-r/(N+1)$, and
\[
D_N(t)=\sum_{r=1}^Nw_r\cos(rt).
\]
The classical Fej\'er identity
\[
1+2D_N(t)
=\frac{1}{N+1}
 \left(\frac{\sin((N+1)t/2)}{\sin(t/2)}\right)^2
\ge0
\]
shows in particular that $D_N(t)\ge-1/2$. It also gives the required upper bounds at the two relevant frequencies. Indeed,
\[
D_N(t)\le
\frac{1}{2(N+1)\sin^2(t/2)}.
\]
Since $0<b\le\pi/2$, $A\in[b,\pi-b]$, $\sin A\ge\sin b\ge2b/\pi$, and $Nb\ge2\pi$, it follows that
\[
D_N(2A)\le\frac{N}{32},
\qquad
D_N(2b)\le\frac{N}{32}.
\]
Now use
\[
\begin{aligned}
\sin^2(rA)\sin^2(rb)
={}&\frac14-\frac14\cos(2rA)-\frac14\cos(2rb)\\
&+\frac18\cos(2r(A-b))+\frac18\cos(2r(A+b)).
\end{aligned}
\]
After multiplying by $w_r$ and summing, the Fej\'er bounds imply
\[
\sum_{r=1}^Nw_r\sin^2(rA)\sin^2(rb)
\ge \frac{N}{8}-\frac{N}{64}-\frac18
=\frac{7N}{64}-\frac18
\ge\frac{5N}{64},
\qquad N\ge4.
\]
Since $\sum_{r=1}^Nw_r=N/2$, at least one $r\le N$ satisfies
\[
\sin^2(rA)\sin^2(rb)\ge\frac5{32}.
\]
For this $r$,
\[
|\cos(r\theta)-\cos(r\phi)|
\ge\sqrt{\frac58}>\frac12.
\]
Finally, since $b=\delta/2$ and $\delta\le\pi$,
\[
r\le N
\le \frac{2\pi}{b}+1
= \frac{4\pi}{\delta}+1
\le \frac{5\pi}{\delta}
<\frac{16}{\delta}.
\]
\end{proof}

The amplification lemma leads to a comparison principle in which the
intrinsic Dubiner distance on $E$ is controlled by the larger of the
Bernstein--Walsh thickening parameter and the Dubiner distance on $K$.
The proof separates the two cases according to which of these quantities
controls the amplified polynomial.

\begin{proposition}\label{prop:dubiner-thickening}
Let $E\subset K\subset\R$ be compact and suppose that, for some $\tau\ge0$,
\[
\norm{q}_K\le e^{\tau\deg q}\norm{q}_E
\]
for every nonconstant real polynomial $q$. Then there is an absolute constant $C$ such that
\[
d_E^D(x,y)
\le C\max\{\tau,d_K^D(x,y)\},
\qquad x,y\in E.
\]
For instance, one may take $C=64$.
\end{proposition}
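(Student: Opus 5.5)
Fix $x,y\in E$ and a real polynomial $p$ of degree $m\ge1$ with $\norm{p}_E\le1$. Write $p(x)=\cos\theta$ and $p(y)=\cos\phi$ with $\theta,\phi\in[0,\pi]$, and put $\delta=|\theta-\phi|$. Set $D=\max\{\tau,d_K^D(x,y)\}$. The goal is $\delta/m\le 64D$. We may assume $\delta>0$. Lemma~\ref{lem:angular-amplification} supplies an integer $1\le r\le16/\delta$ with $|\cos(r\theta)-\cos(r\phi)|\ge\frac12$. Define $q=T_r\circ p$. Then $q$ has degree $rm$ and satisfies $\norm{q}_E\le1$, while
\[
|q(x)-q(y)|=|\cos(r\theta)-\cos(r\phi)|\ge\tfrac12 .
\]
The hypothesis gives $\norm{q}_K\le e^{\tau rm}=:M$. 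The rest of the argument splits into two cases according to which of $\tau rm$ and $d_K^D(x,y)\,rm$ controls this amplified polynomial.

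\emph{Case 1: $\tau rm\ge c$} for a fixed absolute constant $c$, to be tuned later (say $c=\frac14$). Here $rm\ge c/\tau$, and with $r\le16/\delta$ this gives $\delta/m\le 16\tau/c$. This is at most $64\tau$ when $c=\frac14$.

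\emph{Case 2: $\tau rm<c$.} Then $M<e^{c}$, and $\tilde q=q/M$ satisfies $\norm{\tilde q}_K\le1$ and $|\tilde q(x)-\tilde q(y)|\ge 1/(2M)$. Since $\arccos$ has derivative of absolute value at least $1$, the definition \eqref{eq:Dubiner} on $K$ gives
\[
d_K^D(x,y)\ge\frac{|\arccos\tilde q(x)-\arccos\tilde q(y)|}{rm}\ge\frac{1}{2M\,rm}\ge\frac{e^{-c}}{2rm}.
\]
Hence $rm\ge e^{-c}/(2d_K^D(x,y))$, and therefore
\[
\frac{\delta}{m}\le\frac{16}{rm}\le 32e^{c}\,d_K^D(x,y).
\]
With $c=\frac14$ the constant is $32e^{1/4}\approx41<64$.

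Combining the two cases, every $p$ satisfies $\delta/m\le64D$. Taking the supremum over $p$ proves the proposition with $C=64$. The degenerate case $d_K^D(x,y)=0$ is also covered: it forces $x=y$, since $d_K^D$ is a metric, and then $\delta=0$.

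The main obstacle is the normalization step in Case 2. Dividing by $M$ is only harmless when $M$ is bounded, and this boundedness is exactly what the case split arranges by exploiting the freedom in $c$. The lower bound for the $\arccos$ difference uses only that $|\arccos u-\arccos v|\ge|u-v|$ on $[-1,1]$, so the endpoint singularities cause no trouble. Balancing the two cases, which is the choice of $c$, is routine bookkeeping for the constant.
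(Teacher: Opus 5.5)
Your proof is correct and matches the paper's argument: you amplify with $T_r\circ p$ via Lemma~\ref{lem:angular-amplification}, then split on the size of $\tau rm$, normalizing by $e^{\tau rm}$ in the small case and using $|\arccos u-\arccos v|\ge|u-v|$. The only difference is your threshold $c=\tfrac14$ instead of the paper's $\tfrac12$, which gives the bounds $64\tau$ and $32e^{1/4}d_K^D$ in place of $32\tau$ and $32e^{1/2}d_K^D$; both choices yield $C=64$.
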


\begin{proof}
Let $p$ have degree $m\ge1$ and $\norm{p}_E\le1$, and put
\[
\theta=\arccos p(x),\qquad
\phi=\arccos p(y),\qquad
\delta=|\theta-\phi|.
\]
There is nothing to prove if $\delta=0$. Otherwise choose $r$ from Lemma~\ref{lem:angular-amplification} and set
\[
P=T_r\circ p,\qquad s=\tau rm.
\]
Then
\[
\deg P=rm,\qquad \norm{P}_E\le1,
\qquad |P(x)-P(y)|\ge\frac12,
\qquad \norm{P}_K\le e^s.
\]

If $s\ge1/2$, then $r\le16/\delta$ gives
\[
\frac12\le \tau rm\le\frac{16\tau m}{\delta},
\]
and hence
\[
\frac{\delta}{m}\le32\tau.
\]

Suppose now that $s<1/2$ and normalize by setting $Q=e^{-s}P$. Then $\norm{Q}_K\le1$, while the fixed separation survives:
\[
|Q(x)-Q(y)|\ge\frac12e^{-1/2}.
\]
Since cosine is $1$-Lipschitz on $[0,\pi]$,
\[
|\arccos u-\arccos v|\ge|u-v|,
\qquad u,v\in[-1,1].
\]
Therefore the definition of $d_K^D$ gives
\[
rm\,d_K^D(x,y)
\ge |\arccos Q(x)-\arccos Q(y)|
\ge\frac12e^{-1/2}.
\]
Using again $r\le16/\delta$,
\[
\frac{\delta}{m}
\le32e^{1/2}d_K^D(x,y)
<64d_K^D(x,y).
\]
The two cases give
\[
\frac{|\arccos p(x)-\arccos p(y)|}{m}
\le64\max\{\tau,d_K^D(x,y)\}.
\]
Taking the supremum over $p$ proves the result.
\end{proof}

For the Green filling, Bernstein--Walsh gives
\[
\norm{q}_{K_{n,\alpha}}
\le e^{(\alpha/n)\deg q}\norm{q}_E.
\]
Thus Proposition~\ref{prop:dubiner-thickening} applies with $\tau=\alpha/n$. Combining it with the equilibrium-mass estimate above, we obtain, for every $x\in E$, a point $a\in A_n$ such that
\begin{equation}
d_E^D(x,a)
\le\frac{C}{n}
\max\left\{\alpha,(1-\Lambda^{-1})e^{-\alpha}\right\},
\label{eq:intrinsic-dubiner-net}
\end{equation}
where $C$ is an absolute constant. Hence the same nodes form an $O(1/n)$-net for the intrinsic Dubiner metric of $E$.

This identifies the geometric content of the construction. The Green filling makes the disconnected set accessible to the exact Bernstein inequality; equilibrium mass supplies a global sampling coordinate on the filling; and the comparison above transfers the resulting polynomial-scale covering back to the intrinsic geometry of $E$. The direct proof of Theorem~\ref{thm:transfer} remains quantitatively sharper, while \eqref{eq:intrinsic-dubiner-net} shows that its nodes also recover the standard Dubiner covering mechanism behind norming meshes.

\subsection{Recovery of the classical interval construction}\label{subsec:interval}

Take $E=[-1,1]$. Since $g_E=0$ on $E$,
\[
K_{n,\alpha}=E,\qquad m_E(n,\alpha)=1.
\]
The equilibrium measure of $[-1,1]$ is the classical arcsine measure
\[
d\mu_E(x)=\frac{dx}{\pi\sqrt{1-x^2}},
\qquad -1<x<1;
\]
see, for example, \cite{Saff2010}.
Substituting the arcsine density into \eqref{eq:Totik} recovers the
classical Bernstein inequality, for every $p\in\Pi_n$,
\begin{equation}
|p'(x)|\le \frac{n}{\sqrt{1-x^2}}\norm{p}_{[-1,1]},
\qquad -1<x<1.
\label{eq:classical-Bernstein}
\end{equation}
The cumulative equilibrium coordinate is
\begin{equation}
F(x)=\mu_E([-1,x])
=\frac1\pi\left(\arcsin x+\frac\pi2\right),
\qquad -1\le x\le1.
\label{eq:arcsine-coordinate}
\end{equation}
Hence, for any integer $m\ge1$, choosing $F(x_j)=j/m$ gives
\begin{equation}
x_j=-\cos\frac{j\pi}{m},\qquad j=0,\ldots,m,\label{eq:CL-points}
\end{equation}
the Chebyshev--Lobatto points; compare Vianello \cite[pp.~931--932]{Vianello2014}.

In this case the Dubiner interpretation is exact. From \eqref{eq:dubiner-mass} and the test polynomial $p(t)=t$,
\begin{equation}
d_{[-1,1]}^D(x,y)
=|\arccos x-\arccos y|
=\pi|F(x)-F(y)|.
\label{eq:interval-dubiner}
\end{equation}
Thus the equilibrium coordinate is precisely the angular coordinate underlying both the Dubiner geometry and Chebyshev--Lobatto sampling. In particular, the general $O(1/n)$-net conclusion above becomes here exactly the familiar angular spacing of the Chebyshev--Lobatto points.

\section{Uniformly perfect compact sets}\label{sec:up}

A compact set $E\subset\R$ containing at least two points is said to be
\emph{uniformly perfect} if there exists $\gamma\in(0,1)$ such that for
every $x\in E$ and every $0<r<\diam(E)$, the annulus
\[
\{y\in\R:\gamma r\le |x-y|\le r\}
\]
contains a point of $E$. Thus, at every point and every scale, $E$
contains another point at a distance comparable with that scale.
In particular, a uniformly perfect compact set has no isolated points
and is therefore infinite. Geometrically, uniform perfectness rules out
arbitrarily large relative gaps across successive scales, without
requiring connectedness, positive Lebesgue measure, or nonempty interior.

Andrievskii recalls an equivalent formulation in terms of logarithmic
capacity: there exists $\lambda_E>0$ such that
\begin{equation}
\caplog\bigl(E\cap\{y\in\R:|y-x|\le r\}\bigr)
\ge \lambda_E r,
\qquad
x\in E,\quad 0<r<\diam(E);
\label{eq:UP-capacity}
\end{equation}
see \cite[(1.5)]{Andrievskii2017}. In particular,
$\caplog(E)>0$. He also recalls that this capacity-density condition
implies regularity of the Green function: $g_E$ extends continuously
to $E$ with boundary value zero; see the discussion preceding (4.2)
in \cite[p.~517]{Andrievskii2017}. Thus uniformly perfect compact sets
satisfy the potential-theoretic hypotheses of
Theorem~\ref{thm:transfer}.

The remaining ingredient is the number of components of the relevant
Green sublevels. After affine normalization $I=[-1,1]$, Andrievskii
considers
\[
K_n^*=I\cap\{g_E\le 1/n\}.
\]
In the proof of \cite[Theorem~3, p.~521, immediately before
(4.12)]{Andrievskii2017}, for a compact set with infinitely many
components, it is shown that $K_n^*$ is the union of $N(E,n)+1$
disjoint closed intervals and that
\begin{equation}
N(E,n)+1\le c_E n.
\label{eq:Andrievskii}
\end{equation}

\begin{corollary}\label{cor:UP}
Let $E\subset\R$ be a uniformly perfect compact set. Then, for every
$\Lambda>1$, there exist a constant $C_{E,\Lambda}$ and finite sets
$A_n\subset E$ such that
\begin{equation}
|A_n|\le C_{E,\Lambda}n,
\qquad
\norm{p}_E\le\Lambda\norm{p}_{A_n}
\quad(p\in\Pi_n).
\label{eq:UPmesh}
\end{equation}
In particular, every uniformly perfect compact subset of the real line
admits an optimal polynomial mesh.
\end{corollary}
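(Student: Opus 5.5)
The plan is to verify the hypotheses of Theorem~\ref{thm:transfer} and then bound $m_E(n,\alpha)$ linearly in $n$ for one convenient $\alpha$, using Andrievskii's estimate \eqref{eq:Andrievskii}.

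\textbf{Hypotheses.} By \eqref{eq:UP-capacity}, a uniformly perfect compact $E$ has $\caplog(E)>0$. As recalled above from \cite{Andrievskii2017}, $E$ is also regular. So Theorem~\ref{thm:transfer} applies for every $\alpha>0$ and $\Lambda>1$. By Remark~\ref{rem:alpha} it then suffices to show $m_E(n,\alpha_0)=O(n)$ for a single $\alpha_0>0$.

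\textbf{Affine normalization.} Let $\ell(x)=ax+b$ with $a>0$ map $\conv(E)$ onto $[-1,1]$, and put $\widetilde E=\ell(E)$. I will check three invariance facts.
\begin{itemize}[leftmargin=*]
\item Uniform perfectness is preserved with the same $\gamma$, since annuli scale with the diameter.
\item The Green function transforms by $g_{\widetilde E}(\ell(z))=g_E(z)$. Indeed, $g_E\circ\ell^{-1}$ is harmonic off $\widetilde E$, vanishes continuously on $\widetilde E$, and has a logarithmic pole at infinity. Uniqueness of the Green function then gives the identity.
\item Since $\ell(\conv(E))=\conv(\widetilde E)=[-1,1]$, the sublevel sets correspond: $\ell(K_E(t))=K_{\widetilde E}(t)$. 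Hence $m_E(n,\alpha)=m_{\widetilde E}(n,\alpha)$.
\end{itemize}
With $\alpha_0=1$, the set $K_{\widetilde E}(1/n)$ is exactly Andrievskii's $K_n^*$.

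\textbf{Case split.}
\begin{itemize}[leftmargin=*]
\item \emph{Infinitely many components.} If $\widetilde E$ has infinitely many connected components, \eqref{eq:Andrievskii} gives $m_{\widetilde E}(n,1)=N(\widetilde E,n)+1\le c_{\widetilde E}\,n$.
\item \emph{Finitely many components, say $k$.} Uniform perfectness excludes isolated points, so $E$ is a finite union of $k$ nondegenerate intervals. Each component of $K_E(t)$ meets $E$ by Lemma~\ref{lem:gaps}, and each component of $E$ is connected, so it lies inside a single component of $K_E(t)$. Hence $m_E(n,1)\le k$, which is bounded.
\end{itemize}
In both cases $m_E(n,1)\le C_E\,n$ for all $n\ge1$.

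\textbf{Conclusion.} Inserting this bound into \eqref{eq:card-main} with $\alpha=1$ gives
\[
|A_n|\le\Bigl(C_E+\frac{\pi e}{1-\Lambda^{-1}}\Bigr)n=:C_{E,\Lambda}\,n,
\]
together with the norming inequality \eqref{eq:norming-main}. This is \eqref{eq:UPmesh}. The sets $A_n$ lie in $E$ by construction.

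The main obstacle is the faithful import of Andrievskii's estimate: checking that his $K_n^*$ and normalization match ours exactly, including the Green-function invariance under $\ell$. The finite-component case also has to be handled separately, since Andrievskii's statement covers only sets with infinitely many components.
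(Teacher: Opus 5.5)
Your proposal is correct and follows the paper's proof: the same split on the number of connected components, Andrievskii's bound \eqref{eq:Andrievskii} when there are infinitely many, and the argument via Lemma~\ref{lem:gaps} that distinct components of $K_{n,1}$ meet distinct components of $E$ when there are finitely many. Your explicit check that $g_E$ and the sublevels $K_E(t)$ transform correctly under the affine map only spells out what the paper compresses into ``after affine normalization.''
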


\begin{proof}
Suppose first that $E$ has infinitely many connected components.
Then \eqref{eq:Andrievskii} gives
$m_E(n,1)=O_E(n)$ after affine normalization, and
Theorem~\ref{thm:transfer} applies.

Suppose instead that $E$ has finitely many connected components.
Each connected component of a compact subset of $\R$ is a closed
interval, possibly degenerate. Since there are only finitely many
components, a singleton component would be isolated in $E$, which is
impossible for a uniformly perfect set. Hence $E$ is a finite union
of nondegenerate closed intervals.

By Lemma~\ref{lem:gaps}, every component of $K_{n,1}$ meets $E$.
Moreover, each connected component of $E$, being connected and
contained in $K_{n,1}$, lies in a single component of $K_{n,1}$.
Consequently, distinct components of $K_{n,1}$ meet distinct
components of $E$, and $m_E(n,1)$ is bounded by the number of
connected components of $E$. Theorem~\ref{thm:transfer} therefore
applies in this case as well.
\end{proof}

\section{Examples and finite products}\label{sec:examples}

Let $C\subset[0,1]$ be the classical middle-third Cantor set. Its uniform perfectness follows directly from its construction. Given $x\in C$ and $0<r<1$, choose $k$ so that $3^{-k}\le r<3^{-k+1}$ and put $L=3^{-k}$. The level-$k$ basic interval containing $x$ has length $L$. Choose the endpoint $y\in C$ that is farther from $x$. Then $|x-y|\ge L/2>r/6$, while $|x-y|\le L\le r$. Thus
\begin{equation}
\frac r6<|x-y|\le r.
\label{eq:C-UP}
\end{equation}

\begin{corollary}\label{cor:Cantor}
For the ternary Cantor set $C$ and every $\Lambda>1$,
\begin{equation}
n+1\le M_C(n,\Lambda)\le C_\Lambda n.
\label{eq:Cantor-main}
\end{equation}
In particular, the Cantor set $C$ admits an optimal polynomial mesh.
\end{corollary}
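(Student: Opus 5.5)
The upper bound follows from Corollary~\ref{cor:UP}, and the lower bound from a dimension count. For the upper bound, the computation preceding the statement already gives \eqref{eq:C-UP}: for every $x\in C$ and every $0<r<1=\diam(C)$ there is $y\in C$ with $r/6<|x-y|\le r$. Hence the annulus in the definition of uniform perfectness, taken with $\gamma=1/6$, always contains a point of $C$, so $C$ is uniformly perfect. Corollary~\ref{cor:UP} then provides, for every $\Lambda>1$, sets $A_n\subset C$ with $|A_n|\le C_{E,\Lambda}n$ that are norming for $\Pi_n$ with constant $\Lambda$. By the definition \eqref{eq:ME-def}, this gives $M_C(n,\Lambda)\le C_\Lambda n$ with $C_\Lambda:=C_{C,\Lambda}$.

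For the lower bound, let $A\subset C$ satisfy $\norm{p}_C\le\Lambda\norm{p}_A$ for every $p\in\Pi_n$, and suppose that $|A|\le n$. Then the polynomial
\[
p(x)=\prod_{a\in A}(x-a)
\]
has degree $|A|\le n$ and vanishes on $A$; if $A=\emptyset$ we take $p\equiv1$ instead. The norming inequality forces $\norm{p}_C=0$, so $p$ vanishes on $C$. Since $C$ is infinite and $p$ is a nonzero polynomial, this is impossible. Hence $|A|\ge n+1$, and taking the minimum over all admissible $A$ gives $M_C(n,\Lambda)\ge n+1$.

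The last assertion follows because constant-$\Lambda$ norming sets of cardinality $O(n)$ form an optimal mesh by definition. I do not expect any step to be a genuine obstacle. The only point worth checking is that \eqref{eq:C-UP} covers the full range $0<r<\diam(C)$ required in the definition of uniform perfectness. It does, since $\diam(C)=1$ and the choice of $k$ is valid for every $r\in(0,1)$.
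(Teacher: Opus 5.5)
Your proof is correct and follows the paper's argument: the upper bound comes from Corollary~\ref{cor:UP} once \eqref{eq:C-UP} is read as uniform perfectness with $\gamma=1/6$, and the lower bound is the same dimension obstruction via the polynomial $\prod_{a\in A}(x-a)$. You also spell out two details the paper leaves implicit, namely the choice of $\gamma$ and the case $A=\emptyset$.
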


\begin{proof}
The upper bound follows from Corollary~\ref{cor:UP}. The lower bound is the univariate dimension obstruction: if $A\subset C$ has at most $n$ points, a nonzero polynomial of degree at most $n$ can vanish on $A$ but not on the infinite set $C$.
\end{proof}

The same conclusion holds for several other Cantor-type compact sets. Standard fixed-ratio Cantor sets are uniformly perfect by a direct comparison of successive scales. The Smith--Volterra--Cantor set \cite[p.~202]{Simon2015}, which has positive Lebesgue measure and empty interior, is also uniformly perfect: if $L_n=(2^n+1)/2^{2n+1}$ is the common length of its level-$n$ basic intervals, then $L_n/L_{n-1}\ge3/8$, and the farther-endpoint argument gives $3r/16<|x-y|\le r$. Thus the Smith–Volterra–Cantor set also admits an optimal polynomial mesh.

A different source of examples comes from complex dynamics. Given a polynomial $P$ of degree at least two, its filled Julia set is
\[
K(P):=\{z\in\C:(P^{\circ n}(z))_{n\ge0}\ \text{remains bounded}\},
\]
and its Julia set is $J(P):=\partial K(P)$. For the real quadratic family $f_c(z)=z^2+c$ with $c<-2$, $J(f_c)$ is a Cantor subset of the real line \cite{Lyubich2006}. Julia sets of polynomials of degree at least two are uniformly perfect by Hinkkanen's theorem \cite{Hinkkanen1994}. Therefore, Corollary~\ref{cor:UP} gives optimal polynomial meshes on these dynamically generated real Cantor sets as well.

For completeness, we record the standard product principle.

\begin{proposition}\label{prop:product}
Let $E_1,\ldots,E_d\subset\R$ be infinite compact sets. Suppose that for each $j$ there are finite sets $A_{j,n}\subset E_j$, constants $\Lambda_j\ge1$ independent of $n$, and constants $C_j$ such that
\[
\norm{q}_{E_j}\le\Lambda_j\norm{q}_{A_{j,n}}
\quad(q\in\Pi_n),
\qquad
|A_{j,n}|\le C_j n.
\]
Then
\[
A_n:=A_{1,n}\times\cdots\times A_{d,n}
\]
is an optimal polynomial mesh on $E_1\times\cdots\times E_d$ for total-degree polynomials, with norming constant $\prod_{j=1}^d\Lambda_j$ and cardinality $O(n^d)$.
\end{proposition}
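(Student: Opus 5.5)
The plan is the usual tensor-product argument: apply the one-variable norming inequalities one coordinate at a time, freezing the remaining variables. Let $P\in\mathcal P_n^d$ and set $E=E_1\times\cdots\times E_d$. The key observation is that for any fixed values of all variables but the $j$-th, the map $x_j\mapsto P(x_1,\ldots,x_d)$ lies in $\Pi_n$, since total degree $\le n$ forces degree $\le n$ in each separate variable. The univariate hypothesis therefore applies to every such section, with the same constant $\Lambda_j$ and the same set $A_{j,n}$.

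I would proceed by induction on the number of coordinates already discretized. For $0\le k\le d$, let
\[
S_k:=A_{1,n}\times\cdots\times A_{k,n}\times E_{k+1}\times\cdots\times E_d,
\]
so that $S_0=E$ and $S_d=A_n$. The goal at step $k\to k+1$ is
\[
\norm{P}_{S_k}\le\Lambda_{k+1}\norm{P}_{S_{k+1}}.
\]
To prove it, take any $x\in S_k$ and freeze every coordinate except $x_{k+1}$. The resulting univariate polynomial $q(t)=P(x_1,\ldots,x_k,t,x_{k+2},\ldots,x_d)$ belongs to $\Pi_n$. Hence
\[
|P(x)|\le\norm{q}_{E_{k+1}}\le\Lambda_{k+1}\norm{q}_{A_{k+1,n}}\le\Lambda_{k+1}\norm{P}_{S_{k+1}},
\]
where the last inequality holds because replacing $x_{k+1}$ by a point of $A_{k+1,n}$ yields a point of $S_{k+1}$. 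Taking the supremum over $x\in S_k$ gives the step. Chaining the $d$ steps gives the norming constant $\prod_j\Lambda_j$, and $|A_n|=\prod_j|A_{j,n}|\le(\prod_j C_j)\,n^d$.

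For optimality I would also record the matching lower bound $n^d$. A product of infinite compact sets is polynomial determining: if a polynomial vanishes on $E$, induction on $d$ shows that each of its coefficients, viewed as a polynomial in the last variable, vanishes at infinitely many points, so the polynomial is zero. Therefore $\dim(\mathcal P_n^d|_E)=\binom{n+d}{d}\asymp n^d$, and this is exactly the sense of optimality fixed in the introduction.

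There is no substantive obstacle. The only point needing care is the degree bookkeeping, namely that each frozen section has degree at most $n$, so the same degree-$n$ meshes $A_{j,n}$ can be used in every coordinate. This is immediate from the definition of total degree.
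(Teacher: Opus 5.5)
Your proof is correct and follows the paper's argument. The paper likewise freezes all but one coordinate and applies the univariate norming inequality one variable at a time (it writes out $d=2$ and then iterates), multiplies the cardinalities, and uses successive fixation of variables to show that the product is polynomial determining, so that $O(n^d)$ is the optimal order; your intermediate sets $S_k$ just make the paper's ``repeat in the remaining variables'' step explicit.
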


\begin{proof}
For $d=2$ and $p\in\mathcal P_n^2$,
\[
\begin{aligned}
\norm{p}_{E_1\times E_2}
&=\sup_{y\in E_2}\sup_{x\in E_1}|p(x,y)|\\
&\le\Lambda_1\sup_{y\in E_2}\sup_{a\in A_{1,n}}|p(a,y)|\\
&\le\Lambda_1\Lambda_2
\sup_{b\in A_{2,n}}\sup_{a\in A_{1,n}}|p(a,b)|\\
&=\Lambda_1\Lambda_2\norm{p}_{A_{1,n}\times A_{2,n}}.
\end{aligned}
\]
At each step, the remaining one-variable polynomial has degree at most $n$. Repeating the same argument in the remaining variables gives
\[
\norm{p}_{E_1\times\cdots\times E_d}
\le\Bigl(\prod_{j=1}^d\Lambda_j\Bigr)
\norm{p}_{A_n}.
\]
The argument in fact works for polynomials of degree at most $n$ in each coordinate, and hence in particular for total degree at most $n$. Also
\[
|A_n|=\prod_{j=1}^d|A_{j,n}|=O(n^d).
\]
Finally, $E_1\times\cdots\times E_d$ is polynomial determining: successive fixation of all but one variable shows that a polynomial vanishing on the product is identically zero. Hence
\[
\dim(\mathcal P_n^d|_{E_1\times\cdots\times E_d})=\binom{n+d}{d}\asymp n^d,
\]
so the cardinality order is optimal.
\end{proof}

This product stability is standard in polynomial-mesh theory; see \cite{BosEtAl2011}.

\begin{corollary}\label{cor:Cantor-product}
For every $d\ge1$ and every $\Lambda>1$, there are finite sets $A_n\subset C^d$ and a constant $C_{\Lambda,d}$ such that
\[
|A_n|\le C_{\Lambda,d}n^d,
\qquad
\norm{p}_{C^d}\le\Lambda\norm{p}_{A_n}
\quad(p\in\mathcal P_n^d).
\]
In particular, the product Cantor set $C^d$ admits an optimal polynomial mesh.
\end{corollary}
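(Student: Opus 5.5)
The plan is to combine Corollary~\ref{cor:Cantor} with the product principle of Proposition~\ref{prop:product}. The only point needing care is the norming constant. Proposition~\ref{prop:product} multiplies the one-dimensional constants, whereas the statement asks for an arbitrary prescribed $\Lambda>1$ on $C^d$. We therefore choose the one-dimensional constant adapted to $d$: set
\[
\lambda:=\Lambda^{1/d}>1,
\]
so that $\lambda^d=\Lambda$.

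First, apply Corollary~\ref{cor:Cantor}, or directly Corollary~\ref{cor:UP} using the uniform perfectness of $C$ established in \eqref{eq:C-UP}, with norming constant $\lambda$. This produces finite sets $B_n\subset C$ with $\norm{q}_C\le\lambda\norm{q}_{B_n}$ for all $q\in\Pi_n$ and $|B_n|\le C_\lambda n$, where $C_\lambda$ depends only on $\Lambda$ and $d$.

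Next, take $E_j=C$ and $A_{j,n}=B_n$ for $j=1,\ldots,d$ in Proposition~\ref{prop:product}, and define $A_n:=B_n^d\subset C^d$. The proposition gives, for every $p\in\mathcal P_n^d$,
\[
\norm{p}_{C^d}\le\lambda^d\norm{p}_{A_n}=\Lambda\norm{p}_{A_n},
\]
together with
\[
|A_n|=|B_n|^d\le C_\lambda^d\,n^d.
\]
So we may take $C_{\Lambda,d}:=C_{\Lambda^{1/d}}^d$.

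Finally, the optimality claim follows from the last part of Proposition~\ref{prop:product}. Since $C^d$ is polynomial determining, $\dim(\mathcal P_n^d|_{C^d})\asymp n^d$, so cardinality $O(n^d)$ has the optimal order. The case $d=1$ is just Corollary~\ref{cor:Cantor}.

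There is no real obstacle here. The only subtlety is that the norming constant must be rescaled to $\Lambda^{1/d}$ before forming products. This is legitimate because Corollary~\ref{cor:UP} holds for every $\Lambda>1$, with the constant in the cardinality bound depending on the chosen $\Lambda$.
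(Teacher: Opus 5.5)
Your proposal is correct and is essentially the paper's proof: you take a mesh from Corollary~\ref{cor:Cantor} with norming constant $\Lambda^{1/d}$ on each factor and apply Proposition~\ref{prop:product}. Your explicit choice $C_{\Lambda,d}=C_{\Lambda^{1/d}}^d$ and the appeal to polynomial determinacy of $C^d$ for optimality match the paper's argument.
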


\begin{proof}
Apply Proposition~\ref{prop:product} using on each factor a mesh from Corollary~\ref{cor:Cantor} with norming constant $\Lambda^{1/d}$.
\end{proof}

For $d=2$ this gives the planar Cantor dust $C\times C$. It is totally disconnected, has empty interior, and Hausdorff dimension $2\log2/\log3$, while its boundary is the set itself. Thus the one-dimensional potential-theoretic result yields optimal meshes on compact sets with geometry very different from the full-dimensional domains appearing in the classical geometric constructions.

The examples in this section range from zero-measure self-similar Cantor sets to positive-measure fat Cantor sets and dynamically generated real Julia sets. Together with the product principle, they show that the Green-level mechanism applies across a variety of substantially different disconnected geometries.

\section*{Acknowledgements}

The authors used OpenAI's ChatGPT as an auxiliary tool during the
development of this manuscript, including for exploratory discussions,
bibliographic assistance, and editorial revision. All mathematical
arguments, references, and claims in the final manuscript were reviewed
and verified by the authors, who take full responsibility for the content.

\end{document}